\theoremstyle{plain}
\newtheorem{theorem}{Theorem}[section]
\newtheorem{proposition}{Proposition}[section]
\newtheorem{corollary}{Corollary}[section]
\newtheorem*{Theorem}{Theorem}
\theoremstyle{definition}
\newtheorem{definition}{Definition}[section]
\theoremstyle{remark}
\newtheorem{example}{Example}[section]
\renewenvironment{proof}[1]{\vspace*{.1in}\noindent{\bf
Proof{#1}. \/}}{\qed\vspace{3ex}} 
\newcommand{\Id}{\operatorname{Id}}
\newcommand{\Cay}{\operatorname{Cay}}
\newcommand{\Aut}{\operatorname{Aut}}
\newcommand{\bbR}{\mathbb R}
\newcommand{\cY}{\mathcal Y}
\newcommand{\cU}{\mathcal U}
\renewcommand{\>}{\rangle}
\def\@secnumfont{\bfseries}
\def\section{\@startsection{section}{1}%
  \z@{.7\linespacing\@plus\linespacing}{.5\linespacing}%
  {\normalfont\centering\bfseries}}
\def\subsection{\@startsection{subsection}{2}%
  \z@{.5\linespacing\@plus.7\linespacing}{-.5em}%
  {\normalfont\bfseries}}
\title{On groups with Cayley graph isomorphic to a cube}
\author[C.~Hagemeyer]{Colin Hagemeyer}
\author[R.~Scott]{Richard Scott$^\dagger$}
\address{Department of Mathematics and Computer Science\\
Santa Clara University\\
Santa Clara, CA  95053}
\email{rscott@scu.edu}
\thanks{Both authors were supported by a Provost Office Grant from Santa Clara University}
\thanks{$\dagger$ Corresponding author}
\begin{document}

\begin{abstract}
We say that a group $G$ is a {\em cube group} if it is generated by a
set $S$ of involutions such that the corresponding Cayley graph $\Cay(G,S)$
is isomorphic to a cube.  Equivalently, $G$ is a cube group if it acts on
a cube such that the action is simply-transitive on the vertices and
the edge stabilizers are all nontrivial.  The action on the cube
extends to an orthogonal linear action, which we call the {\em
  geometric representation}.  We prove a combinatorial decomposition
for cube groups into products of $2$-element subgroup, and show that
the geometric representation is always reducible.
\end{abstract}

\maketitle

\section{Introduction}

Let $G$ be a group and let $S\subseteq G$ be a subset consisting of
involutions.  We say that the pair $(G,S)$ is a {\em cube group (of
rank $n$)} if the corresponding Cayley graph $\Cay(G,S)$ is
isomorphic to the $1$-skeleton of the $n$-cube.  Groups acting
on CAT($0$) cube complexes such that the action is simply-transitive
on vertices and has nontrivial edge stabilizers were studied in
\cite{Scott} as a natural generalization of right-angled Coxeter
groups.  Cube groups are precisely the finite groups in this class. 

The purpose of this note is to prove two theorems about cube groups.
The first is a product decomposition that implies the existence of a
type of ``boolean'' normal form. 

\begin{Theorem}
Let $(G,S)$ be a cube group of rank $n$.  For
each $s\in S$, let $\<s\>$ denote the subgroup generated by $s$.  Then
there exists an ordering $s_1,\ldots,s_n$ on the set $S$ such that  
\[G=\<s_1\>\<s_2\>\cdots\<s_n\>.\]
In particular, for any $g\in G$, there exist a unique choice of
$m_i\in\{0,1\}$, $i=1,\ldots,n$ such that  
\[g=s_1^{m_1}s_2^{m_2}\cdots s_n^{m_n}.\]
\end{Theorem}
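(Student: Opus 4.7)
The plan is to induct on $n$; the base case $n=1$ is trivial. For the inductive step the goal is to find a generator $s_n\in S$ such that $H:=\<S\setminus\{s_n\}\>$ has index two in $G$, then invoke the hypothesis on the rank-$(n-1)$ cube group $(H,S\setminus\{s_n\})$ and append $s_n$ to the resulting order.

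Granted such an $s_n$, the pieces fit together as follows. The $s_n$-colored edges of $\Cay(G,S)$ form a perfect matching that disconnects the Cayley graph into the cosets $H$ and $Hs_n$; a standard fact about the $n$-cube is that such a disconnecting matching must be a parallel class (the edges in some fixed coordinate direction), so each of $H$ and $Hs_n$ induces a subgraph isomorphic to the $(n-1)$-cube. In particular $\Cay(H,S\setminus\{s_n\})$ is the $(n-1)$-cube, making $(H,S\setminus\{s_n\})$ a cube group of rank $n-1$. By the inductive hypothesis there is an ordering $s_1,\ldots,s_{n-1}$ of $S\setminus\{s_n\}$ with $H=\<s_1\>\cdots\<s_{n-1}\>$, and since $G=H\sqcup Hs_n$, extending by $s_n$ yields $G=\<s_1\>\cdots\<s_n\>$. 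Uniqueness of the boolean expression is then automatic: the map $\{0,1\}^n\to G$ given by $(m_1,\ldots,m_n)\mapsto s_1^{m_1}\cdots s_n^{m_n}$ is a surjection between two $2^n$-element sets.

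Finding $s_n$ is the heart of the matter; equivalently, one must produce a surjection $\phi\colon G\to\bbZ/2$ sending exactly one element of $S$ to $1$. My approach would be to use the geometric action, viewing $G$ as a subgroup of the hyperoctahedral group $(\bbZ/2)^n\rtimes S_n$. Each generator $s\in S$ then takes the form $s=(e_{i_s},\sigma_s)$, with $e_{i_s}$ a standard basis vector (because the edge from $e$ to $s$ must sit in a cube-coordinate direction) and $\sigma_s\in S_n$ an involution or identity fixing $i_s$ (forced by $s^2=e$); moreover $s\mapsto i_s$ is a bijection $S\to\{1,\ldots,n\}$, since the $n$ edges incident to $e$ use the $n$ distinct coordinate directions. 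A parallel-class generator exists iff the projection $\pi(G):=\<\sigma_s:s\in S\>\leq S_n$ has a fixed point on $\{1,\ldots,n\}$, in which case the generator whose index $i_s$ equals that fixed point does the job.

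The main obstacle is proving that $\pi(G)$ always has such a fixed point. Since $|\pi(G)|$ divides $|G|=2^n$, $\pi(G)$ is a $2$-group and its orbit sizes on $\{1,\ldots,n\}$ are powers of two. When $n$ is odd at least one orbit has size one, since the only odd power of two is $1$. For even $n$ the parity argument fails, and one must exploit the stronger condition that $G$ acts \emph{simply} transitively on $\{0,1\}^n$; the rigidity imposed by this together with the fixed-direction conditions $\sigma_s(i_s)=i_s$ should preclude fixed-point-free actions, but pinning this down carefully is where I expect the proof to require the most effort.
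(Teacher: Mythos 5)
There is a genuine gap, and it sits exactly where you predicted: the existence of the distinguished generator $s_n$. Your reductions are correct as far as they go --- if $H=\langle S\setminus\{s_n\}\rangle$ has index two, then the $s_n$-labeled edges form a disconnecting perfect matching, hence a parallel class by the edge-isoperimetric inequality for the cube, and since the left action of $g$ sends the edge $\{1,s_n\}$ (direction $s_n$) to the edge $\{g,gs_n\}$ (direction $j_g(s_n)$), this forces $s_n$ to be a fixed point of the permutation action $j\colon G\to\Aut(S)$, i.e.\ of your $\pi(G)$. But such a fixed point need not exist. The paper's own example in Figure~\ref{fig:8-simplex} is a rank-$8$ cube group whose orbits on $S$ are $\{a,b,e,f\}$ and $\{c,d,g,h\}$: there are no singleton orbits, so no generator can be peeled off in the way you propose, and the parity argument cannot be repaired for even $n$ because the statement you are trying to prove is simply false there. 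Note also that the theorem's conclusion does not secretly guarantee such an $s_n$: in the product $\langle s_1\rangle\cdots\langle s_n\rangle$ the partial product $\langle s_1\rangle\cdots\langle s_{n-1}\rangle$ is only a subset of size $2^{n-1}$, not a subgroup (already in the dihedral example $\langle a\rangle\langle b\rangle=\{1,a,b,ab\}$ is not closed, since $ba=ac$).

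The paper circumvents this by proving the correct, weaker statement (Theorem~\ref{thm:2-orbits}): a $p$-group acting on a set, with a generating set each of whose elements fixes a point, has at least \emph{two orbits} --- not necessarily a fixed point. The proof is a Sylow-style count (\`a la Wielandt) applied to the induced action on subsets of half size. A proper $G$-invariant subset $T\subseteq S$ then yields two standard subgroups with $G=G_TG_{S-T}$ (Proposition~\ref{prop:inv-subset}), and the induction recurses on \emph{both} factors, producing a binary tree of decompositions rather than a linear peeling of one generator at a time. If you want to salvage your setup, the fix is to replace ``fixed point of $\pi(G)$ on coordinates'' by ``proper invariant set of coordinates,'' which splits $\bbR^S$ and the cube into two complementary invariant blocks, and to restructure the induction accordingly; your observation that orbit sizes are powers of two is in the right spirit but must be aimed at producing two orbits rather than a singleton.
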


The action of a cube group $G$ on $\Cay(G,S)$ extends canonically
to an isometric linear action on the full $n$-cube $[-1,1]^n$.  We
call this linear representation the {\em geometric representation} of
$G$.  The second theorem is a decomposition theorem for this
representation. 

\begin{Theorem}\label{Thm:reducible}
If $(G,S)$ is a cube group of rank $\geq 2$, then the geometric
representation is reducible.
\end{Theorem}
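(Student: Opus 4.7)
The plan is to exhibit a nontrivial proper $G$-invariant subspace of $\bbR^n$ by exploiting the center of $G$. Since $|G|=2^n$, $G$ is a nontrivial $2$-group, so $Z(G)$ contains an involution $z$. Because $z$ commutes with every $g\in G$, both of its orthogonal eigenspaces $V_{\pm}\subset\bbR^n$ are $G$-invariant, and provided $z\neq\pm I$, both are nontrivial proper subspaces, immediately giving the desired reducibility. I also note that each generator $s\in S$ sends $v_0$ to an adjacent vertex (differing from $v_0$ in exactly one coordinate), so $s\neq\pm I$ whenever $n\geq 2$; hence if some generator happens to be central, we are done at once.

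The essential task is therefore producing a central involution $z\neq\pm I$. The cleanest approach uses Theorem~1: one would hope that the ordering $s_1,\ldots,s_n$ can be chosen so that the last factor $s_n$ (or some explicit product of generators) lies in $Z(G)$. The only remaining obstruction is the case in which every central involution of $G$ equals $-I$, so that in particular $-I\in G$ and the abelian group $Z(G)$ has a unique involution and is therefore cyclic.

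To handle this remaining case I would split on $|Z(G)|$. If $|Z(G)|=2$, then $G$ is a $2$-group of maximal nilpotency class, and by the classification of such groups $G$ is dihedral, semi-dihedral, or generalized quaternion. The generalized quaternion case is excluded because $Q_{2^{n-1}}$ has only one involution while a cube group is generated by at least two; for dihedral and semi-dihedral groups, all real irreducible representations have dimension at most $2$, so the $n$-dimensional geometric representation must be reducible whenever $n\geq 3$. The case $n=2$ forces $G=(\bbZ/2)^2$, which is trivially reducible. If instead $|Z(G)|\geq 4$, the cyclic center contains an element $w$ with $w^2=-I$, giving $\bbR^n$ a $G$-equivariant complex structure; the resulting complex representation of $G$ on $\mathbb{C}^{n/2}$ can then be analyzed via the character theory of $2$-groups to confirm reducibility. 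The main obstacle is producing $z\neq\pm I$ uniformly: the classification-based argument above is indirect, and a direct construction via Theorem~1 (for instance by locating a central generator) would be considerably cleaner.
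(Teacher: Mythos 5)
Your opening reduction is sound: since $G$ is a nontrivial $2$-group, $Z(G)$ contains an involution $z$, and if $\rho(z)\neq\pm I$ its two eigenspaces are proper nonzero $G$-invariant subspaces. The problem is that the two residual cases are not actually closed, and one rests on a false claim. First, $|Z(G)|=2$ does \emph{not} imply that $G$ has maximal nilpotency class: extraspecial $2$-groups of order $2^{2k+1}\geq 32$ have center of order $2$ and class $2$, so the classification into dihedral, semidihedral, and generalized quaternion groups is unavailable. (Even where that classification does apply, the assertion that semidihedral groups have real irreducibles of dimension at most $2$ is wrong --- their faithful $2$-dimensional complex representations are of complex type, giving real irreducibles of dimension $4$; this happens not to matter for large $n$, but it is another sign the branch is not under control.) Second, in the case $|Z(G)|\geq 4$, a central $w$ with $\rho(w)^2=-I$ gives a $G$-equivariant complex structure, but that by itself does not yield real reducibility: a real-irreducible representation can carry such a structure (e.g.\ $\bbZ/4$ acting on $\bbR^2$ by rotation), and ``analyzed via the character theory of $2$-groups'' is a placeholder rather than an argument. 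So both branches of the case analysis contain genuine gaps, and you never actually exhibit the invariant subspace.

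For contrast, the paper's route sidesteps the center entirely. The decorated graph gives a permutation action $j:G\rightarrow\Aut(S)$ (Proposition~\ref{prop:permrep}), and the explicit formula $\rho_g(e_t)=(-1)^{n(g,t)}e_{j_g(t)}$ shows that for any $G$-invariant subset $T\subseteq S$ the coordinate subspace $\bbR^T$ is $\rho$-invariant. Everything then reduces to showing that the action of $G$ on $S$ has at least two orbits, which is exactly Theorem~\ref{thm:2-orbits}: a Wielandt-style counting argument using only that $G$ is a $2$-group generated by elements each fixing a point of $S$ (here $j_s(s)=s$). If you want to rescue your approach, you would need a uniform construction of a central involution acting as neither $I$ nor $-I$; absent that, the invariant \emph{coordinate} subspace coming from the orbit decomposition of $S$ is the mechanism that actually makes the theorem go through.
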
 

Both of these theorems are consequences of the following general fact
about certain actions of $p$-groups, the proof of which is reminiscent
of one of the standard combinatorial proofs of Sylow's theorem. 

\begin{Theorem}
Let $X$ be a set with more than one element, and let $G$ be a
$p$-group acting on $X$.  If $G$ has a generating set such that every
element fixes some element in $X$, then $X$ has at least two orbits.   
\end{Theorem}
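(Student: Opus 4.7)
The plan is to argue by contradiction, using the fact that in a finite $p$-group every maximal subgroup is normal.

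Suppose $G$ acts transitively on $X$, and let $T\subseteq G$ be a generating set in which every element fixes some point of $X$. Fix $x_0\in X$ and set $H=\operatorname{Stab}_G(x_0)$. Since $|X|>1$ and the action is transitive, $H$ is a proper subgroup of $G$. If $g\in G$ fixes some $y\in X$, then transitivity gives $y=k\cdot x_0$ for some $k\in G$, so $k^{-1}gk\in H$ and $g\in kHk^{-1}$. In particular, every generator $t\in T$ lies in \emph{some} conjugate of $H$, so $T$ is contained in the union of conjugates of $H$. Hence $\langle T\rangle = G$ is contained in the normal closure $\langle H^G\rangle$ of $H$ in $G$; that is, $\langle H^G\rangle=G$.

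On the other hand, in a finite $p$-group the normalizer of any proper subgroup strictly contains it, so every maximal subgroup is normal of index $p$. Since $H$ is proper, it lies in some maximal subgroup $M$, which is then a proper normal subgroup of $G$ containing $H$. But then $\langle H^G\rangle\subseteq M\subsetneq G$, contradicting $\langle H^G\rangle=G$.

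The $p$-group hypothesis is used only at one point --- to guarantee that a proper subgroup $H$ lies inside a \emph{normal} proper subgroup; without this the normal closure of a proper subgroup can easily be the whole group (e.g., transpositions in $S_n$ normally generate $S_n$). The main obstacle is recognizing that the set of elements with fixed points, being a union of conjugates of a single stabilizer, has tightly constrained normal-closure behavior in a $p$-group; once that observation is in place, the rest reduces to invoking the standard fact that maximal subgroups of $p$-groups are normal.
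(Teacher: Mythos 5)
Your proof is correct, and it takes a genuinely different route from the paper's. You reduce the statement to pure group theory: the point stabilizer $H$ of some $x_0$ is proper (by transitivity and $|X|>1$), every element with a fixed point lies in a conjugate of $H$, and in a finite $p$-group the normal closure of a proper subgroup is proper (since $H$ sits inside a maximal, hence normal, subgroup), so a generating set cannot be contained in the union of the conjugates of $H$. Each step checks out, and your argument in fact works verbatim for any finite nilpotent group, cleanly isolating where the $p$-group hypothesis enters. The paper instead runs a Wielandt-style counting argument entirely on the $G$-set side: assuming transitivity, $|X|=p^n$, and counting the $\binom{p^n}{p^{n-1}}=pm$ subsets of size $p^{n-1}$ (with $p\nmid m$) produces a $G$-orbit $\mathcal Y$ of exactly $p$ such subsets, which must partition $X$; a fixed-point count mod $p$ then shows each generator fixes every block of this partition, so $G$ acts trivially on $\mathcal Y$, contradicting $|\mathcal Y|=p$. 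What your approach buys is brevity and a standard structural explanation (normal closures in nilpotent groups); what the paper's buys is a self-contained combinatorial argument using only orbit counting, with no appeal to the subgroup structure of $p$-groups. One small remark: both proofs tacitly assume $G$ is finite (the paper when it writes $|X|=p^n$ via orbit-stabilizer, you when you invoke maximal subgroups of finite $p$-groups), which is harmless here since cube groups have order $2^n$.
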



\section{Decorated graphs and  group presentations}
In this section, we described the relations among the generators of a
cube group in terms of a certain graph with involutions.  Let $S$ be a
finite set, and let $\Aut(S)$ denote its permutation group.  For each
$s\in S$, let $j_s\in\Aut(S)$ be an involution satisfying $j_s(s)=s$.
We can represent this data pictorially as follows.  Let $K_S$ denote
the complete graph with vertex set $S$. At each vertex $s$, draw an
arc between all pairs of edges $\{s,u\}$ and $\{s,v\}$ whenever
$j_s(u)=v$.  We shall refer to such a choice of involutions
$\Gamma=\{j_s\;|\; s\in S\}$ (and the resulting picture) as a  {\em
  decorated graph}. 

\begin{example}
For the decorated graph on $S=\{a,b,c,d,e\}$ shown in
Figure~\ref{fig:5-simplex}, the involutions $j_a$, $j_b$, $j_c$, and
$j_d$ are the transpositions $(bd)$, $(ac)$, $(bd)$, and $(ac)$,
respectively.  The involution $j_e$ is the product $(ac)(bd)$.
\begin{figure}[ht]
\begin{center}
\psfrag{a}{$a$}
\psfrag{b}{$b$}
\psfrag{c}{$c$}
\psfrag{d}{$d$}
\psfrag{e}{$e$}
\includegraphics[scale = .7]{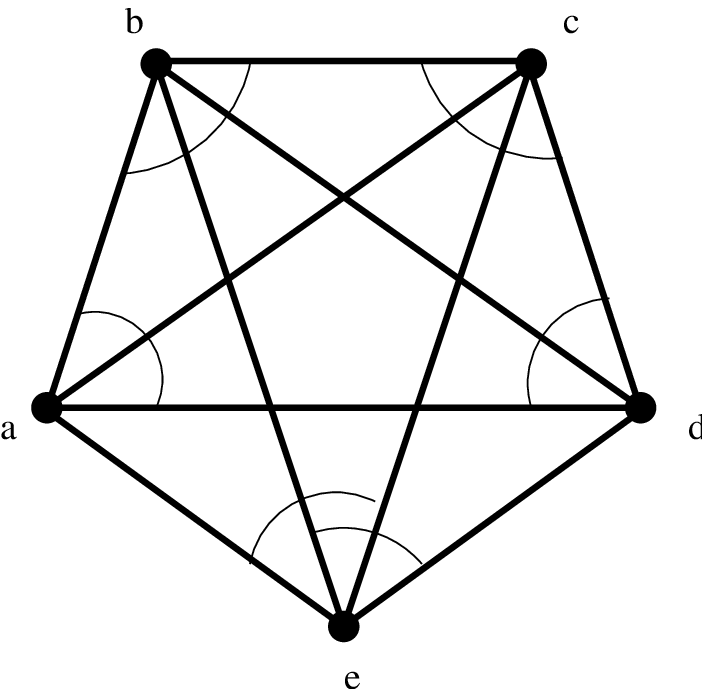}
\caption{\label{fig:5-simplex}}
\end{center}
\end{figure}
\end{example}

Given a decorated graph $\Gamma$ on $S$ and any two distinct elements
$s_1,s_2\in S$, one can define a sequence in $S$ inductively by
$s_{n+1}=j_{s_n}(s_{n-1})$.  Such a sequence will be called a {\em
    trajectory}.  A trajectory $s_1,s_2,\ldots$ is {\em $4$-periodic}
  if $s_n=s_{n+4}$ for all $n\geq 1$.  We shall say the decorated
  graph has {\em no holonomy} along a trajectory $s_1,s_2,\ldots$ if
  $j_{s_4}\circ j_{s_3}\circ j_{s_2}\circ j_{s_1}$ is the identity
  permutation in $\Aut(S)$.    

\begin{definition}
A decorated graph on $S$ is {\em admissible} if every trajectory is
$4$-periodic and has no holonomy along it.
\end{definition}

The $4$-periodicity condition simply means that the edges of a
decorated graph (with their connecting arcs) can be partitioned into
subsets of the form
\begin{itemize}
\item a $4$-cycle with arcs joining consecutive edges, 
\item an angle (two edges meeting at a vertex joined by an arc), or 
\item a single edge (with no connecting arcs touching it).  
\end{itemize}
For example, the decorated graph in Figure~\ref{fig:5-simplex}
satisfies $4$-periodicity since the edges can be partitioned as a
single $4$-cycle ($\Box abcd$), two angles ($\angle aec$, $\angle bed$), and
two single edges ($ac$ and $bd$).  The no-holonomy condition is more
difficult to verify but also holds for this decorated graph (for
example, along a trajectory $a,b,c,d,a,b,c,d,\ldots$ corresponding to
the $4$-cycle, we have $j_d\circ j_c \circ j_b \circ
j_a=(ac)(bd)(ac)(bd)=\Id$).

Given a decorated graph $\Gamma$ on $S$, we can define a group $W(\Gamma)$ 
by the presentation 
\[W(\Gamma)=\<s\in S\;|\;\mbox{$s^2=1$ for all $s\in S$ and $s_1s_2s_3s_4=1$
  for all trajectories $s_1,s_2,\ldots $}\>.\]
The following is a special case of the main result (Theorem~3.2) in
\cite{Scott}. 

\begin{theorem}\label{thm:SMT}
$(G,S)$ is a cube group if and only if there exists an admissible
  decorated graph $\Gamma$ on $S$ and an isomorphism
  $W(\Gamma)\rightarrow G$ that restricts to the identity on $S$.
\end{theorem}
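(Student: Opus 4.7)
The plan is to prove both directions by translating between the combinatorial data of the decorated graph and the 2-face structure of the cube. The key geometric fact used throughout is that a 4-cycle in the 1-skeleton of an $n$-cube bounds a unique 2-face, so the square relations and the trajectories correspond bijectively to the 2-faces of the cube.

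For the forward direction, starting from a cube group $(G,S)$ with $\Cay(G,S)$ the 1-skeleton of the $n$-cube, I would define $j_s\in\Aut(S)$ by setting $j_s(s)=s$ and, for $t\neq s$, declaring $j_s(t)$ to be the $S$-label of the unique edge at the vertex $s$ that lies on the 2-face through the edges $1\to s$ and $1\to t$. Each $j_s$ is an involution by the square symmetry of a 2-face. Next, a trajectory $s_1,s_2,\ldots$ traces the vertices $1, s_1, s_1s_2, s_1s_2s_3,\ldots$ of the 4-cycle starting along $s_1$ and turning into the 2-face determined by $s_1$ and $s_2$; since every 4-cycle in the cube closes after four edges, the trajectory is 4-periodic and $s_1s_2s_3s_4=1$ as elements of $G$. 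The no-holonomy condition $j_{s_4}\circ j_{s_3}\circ j_{s_2}\circ j_{s_1}=\Id$ then records that the four edge-label identifications prescribed by the 2-face, composed around the closed 4-cycle, recover the original labeling at~$1$; invariance of the identification under left translation by elements of $G$ lets us identify the step-$n$ identification with $j_{s_n}$.

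For the reverse direction, given an admissible $\Gamma$, I would show that $\Cay(W(\Gamma),S)$ is the 1-skeleton of an $n$-cube. The involution relations supply the correct local degree at each vertex, and the relation $s_1s_2s_3s_4=1$ creates a 4-cycle for every choice of initial pair $(s_1,s_2)$. The 4-periodicity condition ensures that these 4-cycles are consistent, i.e.\ they trace out the same closed loop independent of which vertex along them one starts from, and the no-holonomy condition lets one extend the edge-labeling coherently from the identity vertex outward to all of $W(\Gamma)$, yielding a labeling of the 1-skeleton of $[-1,1]^n$ by elements of $S$ such that left multiplication by $s\in S$ acts as the corresponding reflection-style involution of the cube. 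This gives a simply transitive action of $W(\Gamma)$ on the $2^n$ vertices of $[-1,1]^n$, so $|W(\Gamma)|\geq 2^n$. The reverse inequality follows because any word in $S$ can be reduced, using the imposed relations, to one of $2^n$ canonical vertex-representatives. Hence $|W(\Gamma)|=2^n$, the Cayley graph is the $n$-cube, and the identity on $S$ extends to the desired isomorphism.

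The main obstacle is the reverse direction: showing that admissibility is strong enough to force $|W(\Gamma)|=2^n$ without appealing to Theorem~1, which in this paper is derived as a corollary of the present theorem. The no-holonomy hypothesis is precisely the obstruction that must vanish in order to integrate the purely local edge-label involutions $j_s$ into a globally consistent labeling of the cube, and making this global integration rigorous, either by a van Kampen style argument over the 2-faces or by constructing the cube action one vertex at a time and verifying consistency at every newly added 4-cycle, is where the combinatorial heart of the proof lies.
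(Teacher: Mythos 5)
Your forward direction reproduces, in essentially the same form, the only part of the argument the paper actually carries out: the paper does not prove this theorem but quotes it as a special case of Theorem~3.2 of \cite{Scott}, and merely sketches how to extract $\Gamma$ from $(G,S)$ by reading trajectories off the $4$-cycles at the identity vertex and setting $j_s(u)=v$ for each trajectory $u,s,v,\ldots$. Your definition of $j_s(t)$ via the $2$-face spanned by the edges at $1$ labeled $s$ and $t$ agrees with this, and your treatment of $4$-periodicity and no-holonomy is at the same level of detail as the paper's own ``one then needs to check.'' So far, no complaint.

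The genuine gap is the reverse direction, and you have diagnosed it yourself: what you write there is a plan, not a proof. Three steps are asserted but not established. (1)~Well-definedness of the global edge-labeling of $[-1,1]^n$: propagating labels outward from a base vertex using the $j_s$'s requires path-independence, which needs both the no-holonomy hypothesis and the fact that the $4$-cycles (the boundaries of $2$-faces) normally generate the fundamental group of the cube's $1$-skeleton; you name this obstruction as ``where the combinatorial heart of the proof lies'' but do not discharge it. (2)~The claimed action of $W(\Gamma)$ on the vertices: to get any action one must check that the candidate permutations $\sigma_s$ satisfy $\sigma_{s_1}\sigma_{s_2}\sigma_{s_3}\sigma_{s_4}=\Id$ for every trajectory, and what the construction plausibly yields is transitivity, hence only $|W(\Gamma)|\geq 2^n$; asserting \emph{simple} transitivity at that point already presupposes the upper bound. (3)~The inequality $|W(\Gamma)|\leq 2^n$ ``because any word reduces to one of $2^n$ canonical representatives'' is precisely the normal-form statement that this paper derives \emph{from} the present theorem (via Theorem~\ref{thm:2-orbits} and Proposition~\ref{prop:inv-subset}), so, as you correctly note, it cannot be invoked here; an independent rewriting or van Kampen argument is required, and none is given. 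These three items are exactly the content outsourced to \cite{Scott}, so the proposal as written does not constitute a proof of the hard implication.
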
 

We refer the reader to \cite{Scott} for the full proof of this
theorem, but indicate here how one obtains the decorated graph
$\Gamma$ from the pair $(G,S)$.

Suppose $(G,S)$ is a cube group.  Recall that
the Cayley graph $\Cay(G,S)$ has vertex set $G$, and two vertices 
$g,g'$ are joined by a directed edge if $g'=gs$ for some $s\in S$.  Each
directed edge $(g,gs)$ is labeled by the element $s$, but since
$s^{-1}=s$, we can unambiguously label the undirected edge $\{g,gs\}$
by the element $s$ as well.  We shall therefore regard $\Cay(G,S)$ as
an undirected graph on $G$ with edges labeled by elements of $S$.
Recall that a presentation for $G$ can be obtained from the Cayley
graph by taking products of labels around all cycles in the graph.  

Since the Cayley graph for $G$ is isomorphic to a cube, cycles are
generated by cycles of length $4$, and any such $4$-cycle can be
translated by the (left) $G$-action (preserving labels) so that it
starts at the identity vertex $1$.  For a presentation, it therefore
suffices to consider the $4$-cycles touching the vertex $1$ (and the
trivial $2$-cycles obtained by repeating an edge).   At the vertex
$1$, there are precisely $n$ incident edges, labeled by the 
elements of $S$.  Any two elements $s_1,s_2\in S$, determine two such
edges and hence span a unique $4$-cycle ($2$-dimensional subface of
the cube).  Reading the  labels around this $4$-cycle, we obtain a
relation $s_1s_2s_3s_4=1$.   

To get the corresponding decorated graph $\Gamma$, we continue the
relation to obtain a $4$-periodic sequence
$s_1,s_2,s_3,s_4,s_1,\ldots$.  These are the trajectories of our
decorated graph $\Gamma$.  To obtain the involutions $j_s$, $s\in S$,
we simply consider all trajectories of the form $u,s,v,\ldots$ and
define $j_s(u)=v$.  One then needs to check that the resulting $j_s$'s
are well-defined involutions, and that there is no holonomy along
trajectories.  

\begin{example}\label{ex:d4}
Let $G$ be the dihedral group of order $8$, represented as the
permutation group $G=\{\Id, (13), (24), (13)(24), (12)(34), (14)(23),
(1234), (1432)\}$.  Letting $S=\{a,b,c\}$ where $a=(13)$,
$b=(12)(34)$, and $c=(14)(23)$, we obtain the $3$-cube as the Cayley
graph (Figure~\ref{fig:d4}, left-hand side), hence $(G,S)$ is a cube group of rank
$3$.  Looking at pairs of edges incident to the vertex $\Id$, we get the
trajectories $a,b,a,c,\ldots$, $b,a,c,a,\ldots$, $a,c,a,b,\ldots$,
$c,a,b,a,\ldots$, $b,c,b,c,\ldots$, and $c,b,c,b,\ldots$.  The
coresponding decorated graph on $S$ is therefore
$\Gamma=\{j_a,j_b,j_c\}$ (Figure~\ref{fig:d4}, right-hand side) where $j_a=(bc)$ and $j_b=j_c=\Id$, and we
have a presentation for $G$ of the form
\[W(\Gamma)=\<a,b,c\;|\; a^2=b^2=c^2=1, abac=1, bcbc=1\>.\]
\begin{figure}[ht]
\begin{center}
\psfrag{a}{$a$}
\psfrag{b}{$b$}
\psfrag{c}{$c$}
\psfrag{Id}{$\Id$}
\psfrag{(12)(34)}{$(12)(34)$}
\psfrag{(13)}{$(13)$}
\psfrag{(24)}{$(24)$}
\psfrag{(14)(23)}{$(14)(23)$}
\psfrag{(1234)}{$(1234)$}
\psfrag{(1432)}{$(1432)$}
\psfrag{(13)(24)}{$(13)(24)$}
\includegraphics[scale = .6]{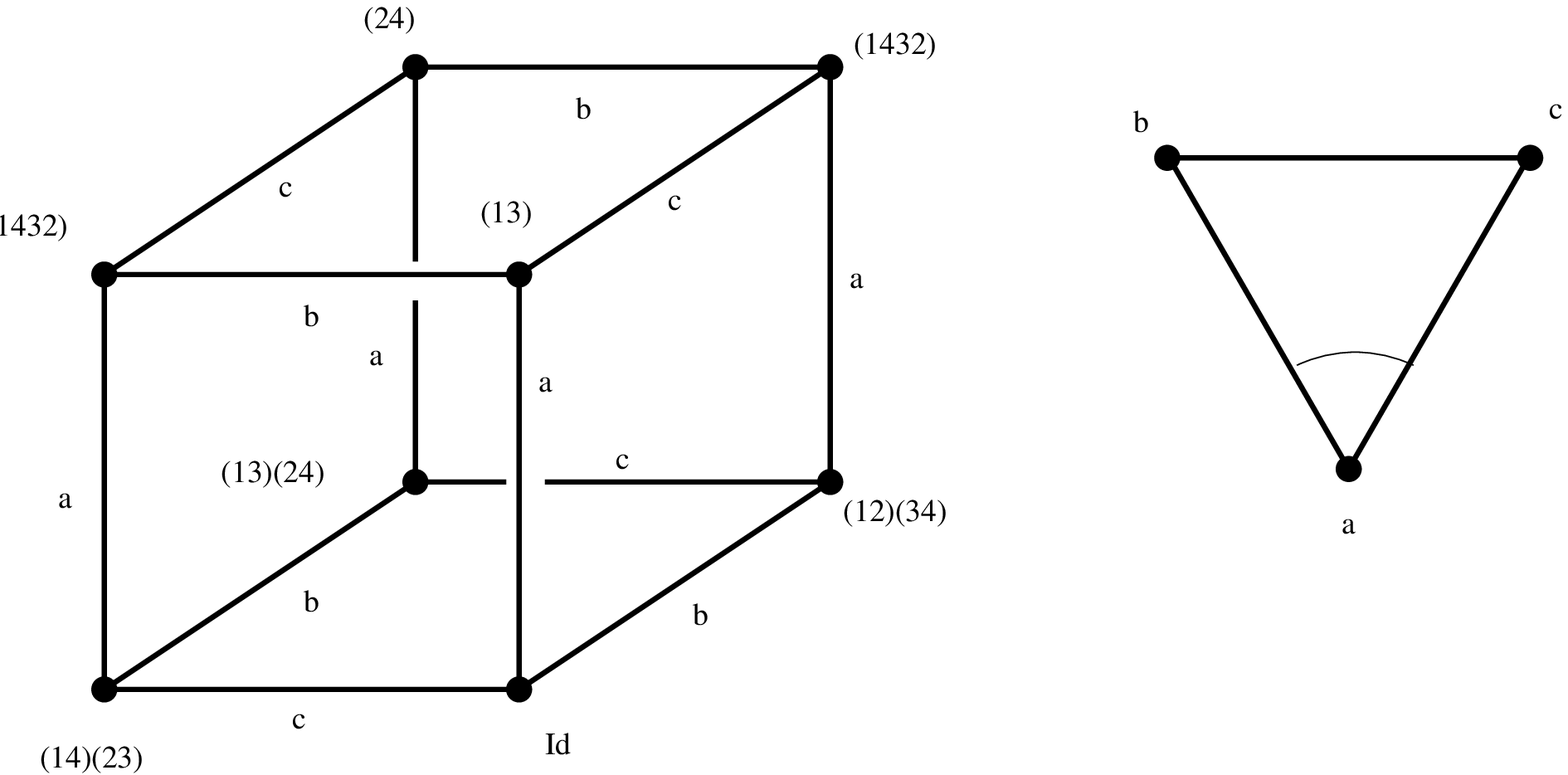}
\caption{\label{fig:d4}}
\end{center}
\end{figure}
\end{example}

\section{Product decompositions of cube groups}

Given a cube group $(G,S)$ any subset $T\subseteq S$ generates a
subgroup of $G$ which we denote by $G_T$.  We will call this subgroup
a {\em standard subgroup} if $(G_T,T)$ is also a cube group.  

\begin{example}
Let $(G,S)$ be the dihedral group in Example~\ref{ex:d4}.  The
subgroup generated by $T=\{b,c\}$ is a standard subgroup since
$(G_T,T)$ has Cayley graph isomorphic to a square.  On the other hand,
if $T=\{a,b\}$, then $(G_T,T)$ is not a standard subgroup.  In this
case $G_T$ is the entire group ($c=aba$) and $T$ has only two
elements, hence the Cayley graph $\Cay(G_T,T)$ will be an $8$-cycle,
which is not isomorphic to a cube. 
\end{example}

In this section we describe how to decompose a cube group into
products of standard subgroups.  We use the fact that there is a
natural action of the group $G$ on its generating set $S$.

\begin{proposition}\label{prop:permrep}
Let $(G,S)$ be a cube group with corresponding decorated graph
$\Gamma=\{j_s\;|\;s\in S\}$.  Then the map $S\rightarrow\Aut(S)$ given
by $s\mapsto j_s$ extends uniquely to a homomorphism
$j:G\rightarrow\Aut(S)$.
\end{proposition}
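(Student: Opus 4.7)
The strategy is to invoke Theorem~\ref{thm:SMT} to replace $G$ by its presentation $W(\Gamma)$, and then apply the universal property of group presentations: a set map from the generators of a presented group to another group extends (uniquely) to a group homomorphism if and only if it sends every defining relator to the identity. Thus the only thing to check is that the assignment $s\mapsto j_s$ kills each relator in $W(\Gamma)$.

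There are two families of relators. The first, $s^2$ for $s\in S$, maps to $j_s^2=\Id$, which is immediate because each $j_s$ is by hypothesis an involution. The second family consists of the length-four relators $s_1s_2s_3s_4$ arising from trajectories; on these we need $j_{s_1}\circ j_{s_2}\circ j_{s_3}\circ j_{s_4}=\Id$ in $\Aut(S)$. Since each $j_{s_i}$ is its own inverse, inverting this equation yields the equivalent statement $j_{s_4}\circ j_{s_3}\circ j_{s_2}\circ j_{s_1}=\Id$, which is exactly the no-holonomy condition that is part of the admissibility of $\Gamma$. Hence every defining relator is sent to the identity, and so $s\mapsto j_s$ extends to a homomorphism $j\colon W(\Gamma)\to\Aut(S)$, and therefore (via the isomorphism $W(\Gamma)\to G$ of Theorem~\ref{thm:SMT}) to a homomorphism $j\colon G\to\Aut(S)$. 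Uniqueness is automatic because $S$ generates $G$.

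There is no genuine obstacle here: the proposition is essentially a translation of admissibility into the language of homomorphisms. The only care required is bookkeeping around the order of composition in $\Aut(S)$ versus the word order in $G$, and this bookkeeping is harmless precisely because each $j_s$ equals its own inverse.
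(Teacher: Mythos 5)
Your proof is correct and follows essentially the same route as the paper's: the paper's (much terser) proof likewise reduces the claim to checking that the $j_s$'s satisfy the defining relations of $W(\Gamma)$, which is exactly the involution and no-holonomy conditions. Your additional bookkeeping about composition order, handled via the fact that each $j_s$ is its own inverse, is a correct elaboration of a point the paper leaves implicit.
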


\begin{proof}{}
One needs only check that the $j_s$'s satisfy equations corresponding
to the relations in the presentation $W(\Gamma)$.  This is precisely
the requirement that the $j_s$'s be involutions and that $\Gamma$ have
no holonomy along trajectories.
\end{proof} 

We shall call $j:S\rightarrow\Aut(S)$ the {\em permutation
representation} for $(G,S)$.  It defines an action of $G$ on the set
$S$ by $g\cdot s=j_g(s)$ for all $g\in G$ and $s\in S$.  Invariant
subsets (i.e., unions of orbits) of this action give rise to product
decompositions of $(G,S)$ with respect to standard subgroups. 

\begin{proposition}\label{prop:inv-subset}
Let $(G,S)$ be a cube group and let $T\subseteq S$ be a $G$-invariant
subset of $S$.  Then $(G_T,T)$ is a standard subgroup and, moreover,
we have a product decomposition
\[G=G_TG_{S-T}\]
(meaning any element $g\in G$ can be written uniquely in the
form $g=g_1g_2$ where $g_1\in G_T$ and $g_2\in G_{S-T}$).
\end{proposition}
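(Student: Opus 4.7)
The plan is to combine an inheritance property for admissible decorated graphs with a bubble-sort rewriting of words and a final cardinality squeeze.

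First, I would note that the $G$-invariance of $T$ amounts to $j_s(T)=T$ for every $s\in S$, since $j:G\to\Aut(S)$ is a homomorphism generated by $\{j_s:s\in S\}$ and each $j_s$ is a bijection; in particular $S-T$ is also $G$-invariant. It follows that $\Gamma|_T:=\{j_t|_T : t\in T\}$ and $\Gamma|_{S-T}:=\{j_s|_{S-T} : s\in S-T\}$ are decorated graphs on $T$ and $S-T$ respectively, and their trajectories coincide with the trajectories of $\Gamma$ whose first two terms lie entirely in $T$ (respectively $S-T$). Hence $4$-periodicity and the no-holonomy condition transfer directly to $\Gamma|_T$ and $\Gamma|_{S-T}$, so by Theorem~\ref{thm:SMT} both $W(\Gamma|_T)$ and $W(\Gamma|_{S-T})$ are cube groups, of orders $2^{|T|}$ and $2^{|S-T|}$. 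Since their defining relations are subsets of those of $W(\Gamma)\cong G$, there are canonical surjections $W(\Gamma|_T)\twoheadrightarrow G_T$ and $W(\Gamma|_{S-T})\twoheadrightarrow G_{S-T}$, giving the bounds $|G_T|\leq 2^{|T|}$ and $|G_{S-T}|\leq 2^{|S-T|}$.

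Next, I would prove the commutation rule: for $s\in S-T$ and $t\in T$, one has $st=t's'$ with $t'\in T$ and $s'\in S-T$. The trajectory starting $s,t$ continues as $s,t,j_t(s),j_{j_t(s)}(t)$, yielding the relation $st\cdot j_t(s)\cdot j_{j_t(s)}(t)=1$, i.e.\ $st=j_{j_t(s)}(t)\,j_t(s)$. By $G$-invariance, $j_t(s)\in S-T$ and $j_{j_t(s)}(t)\in T$, which is the claim. Iterating this swap on any word in $S$ is a bubble-sort: the count of inverted pairs (an $(S-T)$-letter preceding a $T$-letter) strictly decreases with each application, so the procedure terminates in a word of the form (product of $T$-letters)(product of $(S-T)$-letters). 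This shows $G=G_T\cdot G_{S-T}$.

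Finally, surjectivity of the multiplication map $G_T\times G_{S-T}\to G$ together with the bounds of the first step give
\[|G|\leq |G_T|\,|G_{S-T}|\leq 2^{|T|}\cdot 2^{|S-T|}=2^{|S|}=|G|,\]
forcing equality throughout. The surjection $W(\Gamma|_T)\twoheadrightarrow G_T$ is therefore an isomorphism, so $(G_T,T)$ is a cube group by Theorem~\ref{thm:SMT} and $(G_T,T)$ is indeed a standard subgroup; moreover the multiplication map is a bijection, which yields uniqueness of the factorization $g=g_1g_2$. The main point requiring care is the commutation rule, where one must verify that both newly produced letters land on the correct sides of the partition $T\sqcup(S-T)$---this is exactly what $G$-invariance of $T$ provides, and the remaining steps are essentially bookkeeping plus counting.
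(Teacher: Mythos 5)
Your argument is correct, but it takes a genuinely different route from the paper's. The shared core is the commutation rule $st=t's'$ coming from the trajectory $s,t,j_t(s),j_{j_t(s)}(t)$ together with $G$-invariance of $T$ and $S-T$; both proofs use this (yours via an explicit bubble-sort on inversions) to obtain $G=G_TG_{S-T}$. Where you diverge is in proving that $(G_T,T)$ is a standard subgroup and that the factorization is unique. The paper argues geometrically inside $\Cay(G,S)$: it takes the subcube $X_T$ spanned by the edges at the identity labeled by $T$, shows by induction on distance to the identity vertex (propagating labels around $4$-cycles) that every edge of $X_T$ is labeled by an element of $T$, so that $X_T=\Cay(G_T,T)$ is a $|T|$-cube, and then derives uniqueness from $X_T\cap X_{S-T}=\{1\}$, which forces $G_T\cap G_{S-T}=\{1\}$. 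You instead work with presentations: you restrict the decorated graph to $T$, check that admissibility is inherited, invoke Theorem~\ref{thm:SMT} to get $|W(\Gamma|_T)|=2^{|T|}$, and combine the surjection $W(\Gamma|_T)\twoheadrightarrow G_T$ with the cardinality squeeze $|G|\le|G_T|\,|G_{S-T}|\le 2^{|S|}=|G|$ to force every map in sight to be a bijection at once. Your version leans a bit harder on Theorem~\ref{thm:SMT} (in particular on the implicit point that for an admissible decorated graph the generating set injects into $W(\Gamma|_T)$ as distinct involutions, which is how the theorem is meant to be read and applied), but in exchange it delivers $|G_T|=2^{|T|}$, the standard-subgroup property, and uniqueness in a single stroke, avoiding the paper's induction over the subcube. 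Both are complete proofs.
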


\begin{proof}{}
For any subset $T\subseteq S$, let $X_T$ denote the $|T|$-dimensional
subcube of $\Cay(G,S)$ containing the edges incident to $1$ that are
labeled by $t\in T$.  To show that $G_T$ is a standard subgroup, it
suffices to show that all of the labels on the edges of $X_T$ are in
the subset $T$, since then $X_T$ will coincide with $\Cay(G_T,T)$.
For this, it is enough to show that the labels around any $4$-cycle in
$X_T$ are always in $T$.   Since $T$ is $G$-invariant, any trajectory
starting $t_1,t_2,j_{t_2}(t_1),\ldots$ with $t_1,t_2\in T$ will have
all terms in $T$, hence if a $4$-cycle in $\Cay(G_T,T)$ has $2$
consecutive edges in $T$, it will have all edges in $T$.  It follows
that all $4$-cycles in $X_T$ incident to the vertex $1$ have edges
labeled by elements of $T$, and then by induction on the distance to
$1$ that all $4$-cycles in $X_T$ are labeled by elements of $T$.

Since $S-T$ will also be $G$-invariant, we have $2$ standard subgroups
$G_T$ and $G_{S-T}$.  The subcubes $X_T$ and $X_{S-T}$ will intersect
only in the vertex $1$, hence the two subgroups $G_T$ and $G_{S-T}$
have trivial intersection.  To prove the product
decomposition, it is enough to show that any product $st$ with
$s\in S-T$ and $t\in T$ can be rewritten at $st=t's'$ with $t'\in T$
and $s'\in S-T$.   Consider the trajectory $s,t,s',t',\ldots$ starting
with $s$ and $t$.  Then $s'=j_t(s)$ and
$t'=t_{s'}(t)$.  Since $S-T$ is $G$-invariant and $s\in
S-T$, we know $s'\in S-T$, and since $T$ is $G$-invariant and $t\in
T$, we know $t'\in T$.  On the other hand, the relation coming from
this trajectory is $sts't'=1$, or (since generators are all
involutions) $st=t's'$.    
\end{proof}

It turns out that for any cube group of rank at least $2$, there is
always a proper nontrivial invariant subset.  This is the key
technical result of this paper and uses the same counting argument
found in one of the standard proofs of the first Sylow theorem (see
\cite{Wielandt}).

\begin{theorem}\label{thm:2-orbits}
Let $G$ be a $p$-group acting on a set $X$ with $|X|\geq 2$.  Assume
further that there is a generating subset $S\subseteq G$ such that for
all $s\in S$ there exists an $x\in X$ such that $s\cdot x=x$.  Then
$X$ has at least two $G$-orbits.
\end{theorem}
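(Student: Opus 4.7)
The plan is to proceed by induction on $|G|$. The base case $|G|=1$ is immediate, since every element of $X$ is then its own orbit and $|X|\geq 2$ gives the required two orbits. For the inductive step, since $G$ is a nontrivial $p$-group its center $Z(G)$ is nontrivial, so I would pick a central subgroup $Z\leq Z(G)$ of order $p$ and split on whether $Z$ acts transitively on $X$.

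If $Z$ has at least two orbits on $X$, then the quotient set $X/Z$ has at least two elements and the quotient group $G/Z$ is a $p$-group of strictly smaller order. It acts on $X/Z$, is generated by the images $\bar S$ of $S$, and the image $sZ$ of any $s\in S$ fixes the $Z$-orbit $Zx$ whenever $s$ fixes $x$. The induction hypothesis applied to $G/Z$ acting on $X/Z$ then produces at least two $G/Z$-orbits on $X/Z$, which lift to at least two $G$-orbits on $X$.

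If instead $Z$ acts transitively on $X$, then $|X|=|Z|=p$ and the action of $Z$ is simply transitive. Because $Z$ is central, conjugation by any $z\in Z$ is trivial on $G$, so $G_{zx}=zG_xz^{-1}=G_x$; combined with the transitivity of $Z$, this forces all point stabilizers to coincide in a common subgroup $H$. Every generator $s\in S$ fixes some point and hence lies in some $G_x=H$, so $G=\langle S\rangle\subseteq H$. But then $G$ fixes $x$, and since $Z\cdot x=X$ this forces $|X|=1$, contradicting $|X|\geq 2$. Thus this case cannot occur under the hypotheses, and the induction is complete. I expect the main subtlety to lie in this final reduction: one must combine the centrality of $Z$ with its transitivity to collapse all stabilizers into a single subgroup that ends up absorbing $\langle S\rangle$, which is exactly the Sylow-flavored step hinted at in the paper.
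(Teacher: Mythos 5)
Your proof is correct, but it takes a genuinely different route from the paper's. The paper argues directly, Wielandt-style: assuming transitivity, $|X|=p^n$, and the $p$-adic valuation of $\binom{p^n}{p^{n-1}}$ being exactly $1$ forces the action on $p^{n-1}$-element subsets to have an orbit $\cY$ of size exactly $p$, which is then a partition of $X$ into $p$ blocks; the congruence $|\cY^{\<s\>}|\equiv|\cY|\pmod p$ together with $\cY^{\<s\>}\neq\emptyset$ makes every generator stabilize every block, contradicting transitivity. You instead induct on $|G|$ through a central subgroup $Z$ of order $p$: if $Z$ is intransitive you pass to $G/Z$ acting on $X/Z$ (and the hypotheses do descend, since $s\cdot Zx=Z(s\cdot x)=Zx$ by centrality), while if $Z$ is transitive, centrality makes all point stabilizers coincide in a single subgroup $H$ that must contain $\<S\>=G$, whence $Z$ fixes a point and cannot be transitive on a set with $|X|\geq 2$ --- a clean contradiction. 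Your argument trades the binomial-coefficient valuation computation and the fixed-point congruence for the structural facts that a nontrivial $p$-group has nontrivial center and that stabilizers along an orbit are conjugate; it is arguably shorter and more transparent. The paper's version has the virtue of being a single direct counting argument that deliberately mirrors Wielandt's proof of the Sylow existence theorem (which the authors cite as their model). Both are elementary and complete.
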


\begin{proof}{}
Assume, on the contrary that the action is transitive on $X$.  Since
any subgroup of $G$ is also a $p$-group, the order of any orbit must
also be a power of $p$, hence $|X|=p^n$ for some $n$.  Let $\cU$
denote the set of all subsets of $X$ of size $p^{n-1}$.  Then 
\[|\cU| = \binom{p^n}{p^{n-1}} = p\prod_{j=1}^{p^{n-1}-1}\frac{p^n-j}{p^{n-1}-j}=
p\prod_{j=1}^{p^{n-1}-1}\frac{p^{n-\nu_p(j)}-j/p^{\nu_p(j)}}{p^{n-1-\nu_p(j)}-j/p^{\nu_p(j)}}\]
where $\nu_p(j)$ denotes the $p$-adic valuation of $j$.  Since none of
the terms in the product on the right have any factors of $p$, we can
write $|\cU|=pm$ for some integer $m$ relatively prime to $p$.  

Now consider the induced action of $G$ on $\cU$.  Since all orbits of this
action must have order a power of $p$, and since the sum of these
orders must be $pm$, there must be at least one orbit of size exactly
$p$.  Let $\cY$ be such an orbit.  Then $\cY$ consists of $p$ subsets
of $X$ of size exactly $p^{n-1}$, and since the action of $G$ on $X$
is transitive, these subsets form a partition of $X$.  Given $x\in X$,
we let $[x]\in\cY$ denote its equivalence class.  Now, given $s\in S$,
let $\<s\>$ denote the subgroup it generates, and let $\cY^{\<s\>}$
denote the elements of $\cY$ fixed by $\<s\>$.  Since there exists an
$x\in X$ such that $s\cdot x=x$, we know that $s\cdot[x]=[x]$, hence
$\cY^{\<s\>}$ is not empty.  On the other hand, since $\<s\>$ is a
$p$-group, we have 
\[|\cY^{\<s\>}|\equiv|\cY|\bmod p\equiv p \bmod p.\]
It follows that $\cY^{\<s\>}=\cY$, so $s$ fixes $\cY$.  Since this
holds for all $s$ in the generating set $S$, the action of $G$ on
$\cY$ must be trivial.  But this contradicts transitivity of the
action of $G$ on $X$
\end{proof}

To apply this theorem (in the case $p=2$) to our setting, we suppose
$(G,S)$ is a cube group with $|S|\geq 2$ and take $X=S$.  The action
of $G$ on $S$ has the property that $s\cdot s=j_s(s)=s$, hence every
generator fixes some element.  By Theorem~\ref{thm:2-orbits} it
follwos that the action has at least two orbits.  Combining this with
the previous proposition, we obtain our first theorem from the
introduction.  

\begin{corollary}
Let $(G,S)$ be a cube group of rank $n$.  For each $s\in S$, let
$\<s\>$ denote the subgroup generated by $s$.  Then there exists an
ordering $s_1,\ldots,s_n$ on the set $S$ such that   
\[G=\<s_1\>\<s_2\>\cdots\<s_n\>.\]
\end{corollary}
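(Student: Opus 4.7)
The plan is to prove the statement by induction on the rank $n$, using Theorem~\ref{thm:2-orbits} and Proposition~\ref{prop:inv-subset} as the two main engines.

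For the base case $n=1$, the group $G$ is generated by a single involution $s_1$, so $G=\<s_1\>$ trivially. For the inductive step, suppose the result holds for all cube groups of rank less than $n$, and let $(G,S)$ be a cube group of rank $n\geq 2$. First I would observe that $G$ is a $2$-group: its order equals the number of vertices of the $n$-cube, namely $2^n$. Next, I would invoke the permutation representation $j:G\to\Aut(S)$ from Proposition~\ref{prop:permrep} to view $S$ as a $G$-set. Since $j_s(s)=s$ by the definition of a decorated graph, each generator $s\in S$ fixes the element $s\in S$. Thus the hypotheses of Theorem~\ref{thm:2-orbits} are satisfied (with $p=2$ and $X=S$, using $|S|=n\geq 2$), so the action of $G$ on $S$ has at least two orbits.

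Choose any single $G$-orbit $T\subsetneq S$; then both $T$ and its complement $S-T$ are proper nonempty $G$-invariant subsets of $S$. By Proposition~\ref{prop:inv-subset}, $(G_T,T)$ and $(G_{S-T},S-T)$ are standard subgroups (hence themselves cube groups, of ranks $|T|$ and $|S-T|$ respectively, both strictly less than $n$), and we have the product decomposition $G = G_T\cdot G_{S-T}$. Applying the inductive hypothesis to each factor, we obtain orderings $t_1,\ldots,t_k$ of $T$ and $u_1,\ldots,u_{n-k}$ of $S-T$ such that $G_T = \<t_1\>\cdots\<t_k\>$ and $G_{S-T} = \<u_1\>\cdots\<u_{n-k}\>$.

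Concatenating these two orderings yields an ordering $s_1,\ldots,s_n$ of $S$ with
\[G = G_T\cdot G_{S-T} = \<t_1\>\cdots\<t_k\>\<u_1\>\cdots\<u_{n-k}\> = \<s_1\>\<s_2\>\cdots\<s_n\>,\]
completing the induction. The uniqueness of the resulting $2^n$-element normal form then follows automatically: the product on the right has at most $2^n$ distinct values, it exhausts $G$, and $|G|=2^n$, so each $g\in G$ admits exactly one such expression. The main conceptual obstacle is really the input from Theorem~\ref{thm:2-orbits}, which has already been established; once one notices that $|G|=2^n$ makes $G$ a $2$-group and that the self-fixing property $j_s(s)=s$ supplies the generator hypothesis, the induction is essentially forced by Proposition~\ref{prop:inv-subset}.
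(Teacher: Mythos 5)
Your proof is correct and follows essentially the same route as the paper's: induct on the rank, use Theorem~\ref{thm:2-orbits} (with $p=2$, $X=S$, and the observation that $j_s(s)=s$) to obtain a proper nontrivial $G$-invariant subset $T$, and then combine the inductive decompositions of $G_T$ and $G_{S-T}$ via Proposition~\ref{prop:inv-subset}. The extra remarks you include (that $|G|=2^n$ makes $G$ a $2$-group, and the counting argument for uniqueness of the normal form) are accurate and consistent with what the paper states or leaves implicit.
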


\begin{proof}{}
The proof is by induction on the rank of $S$.  If $S=\{s\}$, then
$G=\<s\>$.  In general, assume the decomposition holds for all cube
groups of rank $<n$, and let $(G,S)$ be a cube group of rank $n$.
By Theorem~\ref{thm:2-orbits}, there exists a proper, nontrivial
$G$-invariant subset $T\subseteq S$.  The groups $(G_T,T)$ and
$(G_{S-T},S-T)$ are cube groups of rank $<n$, say $k$ and $n-k$,
respectively.  Hence by induction, there exists an ordering
$s_1,\ldots,s_k$ of the elements in $T$, and an ordering
$s_{k+1},\ldots,s_n$ of the elements in $S-T$, such that 
\[G_T=\<s_1\>\<s_2\>\cdots\<s_k\>\;\;\mbox{and}\;\;
G_{S-T}=\<s_{k+1}\>\<s_{k+2}\>\cdots\<s_n\>.\]
The result then follows from Proposition~\ref{prop:inv-subset}.
\end{proof}

\begin{example}
If $(G,S)$ is the dihedral group of Example~\ref{ex:d4}, then $S$
breaks up into $G$-orbits $\{a\}$ and $T=\{b,c\}$, and $T$
breaks up into $G_T$-orbits $\{b\}$, and $\{c\}$.  Hence,
\[G=\<a\>\<b\>\<c\>\]
and we can take $a,b,c$ as our ordering on $S$.  In other words, 
the elements of $G$ can be listed as $G=\{1,a,b,c,ab,ac,bc,abc\}$.  On the
other hand, the ordering $b,a,c$ does not respect the orbit structure and 
$\{1,b,a,c,ba,bc,ac,bac\}$ will be a proper subset of $G$ (since
$ba=ac$ and $bac=a$).
\end{example}

\begin{example}
For a more complex example, consider the decorated graph in
Figure~\ref{fig:8-simplex} (the missing edges correspond to
$2$-periodic trajectories, hence commuting generators).  
\begin{figure}[ht]
\begin{center}
\psfrag{a}{$a$}
\psfrag{b}{$b$}
\psfrag{c}{$c$}
\psfrag{d}{$d$}
\psfrag{e}{$e$}
\psfrag{f}{$f$}
\psfrag{g}{$g$}
\psfrag{h}{$h$}
\psfrag{S}{$S$}
\psfrag{T1}{$T_1$}
\psfrag{T2}{$T_2$}
\psfrag{T11}{$T_{1,1}$}
\psfrag{T12}{$T_{1,2}$}
\psfrag{T21}{$T_{2,1}$}
\psfrag{T22}{$T_{2,2}$}
\includegraphics[scale = .55]{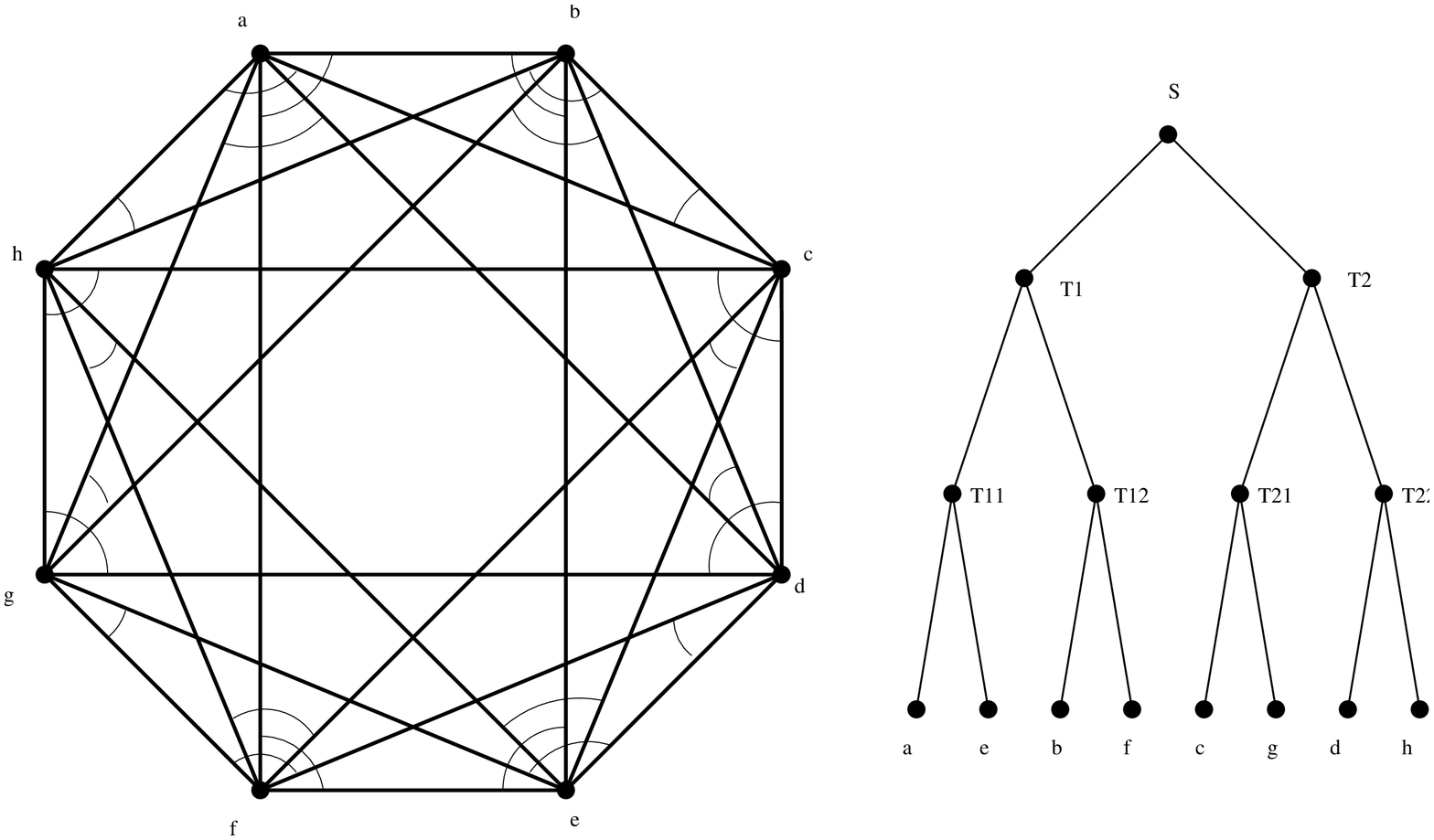}
\caption{\label{fig:8-simplex}}
\end{center}
\end{figure}
One can show that this graph is admissible, hence defines a cube group
$(G,S)$ of order $2^8=256$.  The $G$-orbits are $T_1=\{a,b,e,f\}$ and 
$T_2=\{c,d,g,h\}$.  The $G_{T_1}$-orbits are $T_{1,1}=\{a,e\}$,
$T_{1,2}=\{b,f\}$, and the $G_{T_2}$-orbits are $T_{2,1}=\{c,g\}$,
$T_{2,2}=\{d,h\}$.  Finally, all of these orbits break up into
singleton orbits, so we can take as our decomposition
\[G = G_{T_1}G_{T_2} = G_{T_{1,1}}G_{T_{1,2}}G_{T_{2,1}}G_{T_{2,2}} =
\<a\>\<e\>\<b\>\<f\>\<c\>\<g\>\<d\>\<h\>.\]
In fact any planar representation of the orbit tree shown in
Figure~\ref{fig:8-simplex} will give rise to a different product
decomposition by reading off the final nodes from left to right. 
\end{example}
 
\section{The geometric representation}
In this section we describe the geometric representation of a cube
group $(G,S)$ arising from the left action of $G$ on $\Cay(G,S)$.  We
use the fact that the $n$-cube is rigid in the sense that
any (graph) automorphism of its $1$-skeleton is the restriction of an
isometry of the entire cube.  

Let $(G,S)$ be a cube group, and let $\bbR^S$ denote the finite
dimensional real Euclidean space with standard basis $\{e_s\;|\;s\in
S\}$.  There is a natural identification of $\Cay(G,S)$ with
the standard cube $[-1,1]^S\subseteq\bbR^S$ given as follows.  Since
$\Cay(G,S)$ is isomorphic to the $n$-cube with $n=|S|$, we can index
the vertices using subsets of $S$.  More precisely, we let
$g_{\emptyset}$ be the identity vertex.  For any subset $T\subseteq S$,
there is a unique minimal subcube containing the vertex
$g_{\emptyset}$ and the vertices $s$ for all $s\in T$.  We let $g_T$ denote
the vertex opposite $g_{\emptyset}$ in this subcube.  The embedding of
$\Cay(G,S)$ into $\bbR^S$ is then given by the mapping 
\[g_T\mapsto v_T:=\sum_{s\not\in T}e_s-\sum_{s\in T}e_s\]
for all $T\subseteq S$.  It follows from the way we indexed the
elements of $G$ that the edges in the Cayley graph will map precisely
to the edges in the $1$-skeleton of the cube $[-1,1]^S$.  
For the remainder of the paper, we shall identify
$\Cay(G,S)$ with the $1$-skeleton of the cube $[-1,1]^S$ via this
embedding.   

Let $(G,S)$ be a cube group.  Any element $g\in G$ determines an
automorphism of the Cayley graph $\Cay(G,S)$, hence an isometry of the
cube $[-1,1]^S$.  Any such isometry is the restriction of a unique
orthogonal linear transformation $\rho_g:\bbR^S\rightarrow\bbR^S$.
The resulting homomorphism $\rho:G\rightarrow GL(\bbR^S)$ taking $g$
to $\rho_g$ will be called the {\em geometric representation} of the
pair $(G,S)$.  Since $G$ acts simply-transitively on itself and $G$ is
identified with the vertices of the cube $[-1,1]^S$, the geometric 
representation is obviously faithful.

The geometric representation can be described explicitly in terms of
the permutation representation $j:G\rightarrow\Aut(S)$.  Given a
product representation $g=s_ks_{k-1}\cdots s_1$ for $g$ in terms of
generators $s_1,s_2,\ldots 
s_k\in S$ and given any element $t\in S$, we let $n(g,t)$ denote the
cardinality of the set 
\[\{i\in[1,k-1]\;|\; s_{i+1}=j_{s_{i}}j_{s_{i-1}}\cdots j_{s_1}(t)\}.\]
We then obtain the following formula for $\rho$. 

\begin{proposition}
Let $(G,S)$ be a cube group and let $\Gamma=\{j_s\;|\;s\in S\}$ be the
corresponding decorated graph.  For any $g\in G$, we let
$j_g\in\Aut(S)$ denote the image of $g$ under the homomorphism $j$.
The linear transformation $\rho_g:\bbR^S\rightarrow\bbR^S$ is then
given by 
\[\rho_g(e_t)=(-1)^{n(g,t)}e_{j_g(t)}\;\;\mbox{for all $t\in S$}.\]
(In particular, $n(g,s)$ is independent of the product representation
of $g$ in terms of generators.)
\end{proposition}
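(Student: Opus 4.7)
The plan is to reduce the formula to the single-generator case via the homomorphism property of $\rho$. Since left-multiplication on $\Cay(G,S)$ respects composition, $\rho\colon G \to GL(\bbR^S)$ is a homomorphism, so for any product $g = s_k s_{k-1} \cdots s_1$ one has $\rho_g = \rho_{s_k} \circ \cdots \circ \rho_{s_1}$; likewise $j_g = j_{s_k} \cdots j_{s_1}$ by Proposition~\ref{prop:permrep}. It therefore suffices to verify the formula for single generators and iterate.

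For the base case $g = s$, the map $\rho_s$ is an isometry of the cube $[-1,1]^S$, hence a signed permutation of the basis: $\rho_s(e_t) = \epsilon_t\, e_{\pi_s(t)}$ for some $\pi_s \in \Aut(S)$ and $\epsilon_t \in \{\pm 1\}$. Since $\rho_s$ preserves edge labels on $\Cay(G,S)$, the edge $v_\emptyset \sim v_{\{s\}}$ (the unique edge at $v_\emptyset$ labeled $s$) must map to the unique edge at $\rho_s(v_\emptyset) = v_{\{s\}}$ labeled $s$, which is that same edge traversed in reverse (since $s^2 = 1$). Applying $\rho_s$ to the displacement $v_{\{s\}} - v_\emptyset$ gives $v_\emptyset - v_{\{s\}}$, hence $\rho_s(e_s) = -e_s$; so $\pi_s(s) = s$ and $\epsilon_s = -1$, while the constraint $\rho_s(v_\emptyset) = v_{\{s\}}$ forces every other sign to be $+1$.

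For $t \neq s$, the edge $v_\emptyset \sim v_{\{t\}}$ maps to the edge $v_{\{s\}} \sim \rho_s(v_{\{t\}})$, which in the Cayley graph is $s \sim st$, so the task reduces to identifying the cube-label of $st$. The key step uses the trajectory $(t, s, j_s(t), j_{j_s(t)}(s))$, whose defining relation $t \cdot s \cdot j_s(t) \cdot j_{j_s(t)}(s) = 1$ rearranges to $st \cdot j_{j_s(t)}(s) = j_s(t)$. Thus $st$ is adjacent to $j_s(t)$ (and to $s$ via the edge labeled $t$), so $st$ is a common neighbor of $s$ and $j_s(t)$ other than $1$. Since these two vertices lie at cube-distance $2$ and therefore have exactly two common neighbors, namely $1$ and $g_{\{s, j_s(t)\}}$, one concludes $st = g_{\{s, j_s(t)\}}$. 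The displacement $v_{\{s, j_s(t)\}} - v_{\{s\}}$ then equals $-2\, e_{j_s(t)}$, matching $\rho_s(-2\, e_t)$, and so $\rho_s(e_t) = e_{j_s(t)}$.

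To finish, iterate the base case. Setting $t_0 = t$ and $t_i = j_{s_i}(t_{i-1})$, each application of $\rho_{s_i}$ sends $\pm e_{t_{i-1}}$ to $\pm e_{t_i}$, introducing an extra sign flip exactly when $t_{i-1} = s_i$, i.e., when $s_i = j_{s_{i-1}} \cdots j_{s_1}(t)$. After $k$ steps one obtains $\rho_g(e_t) = (-1)^{n(g,t)} e_{j_g(t)}$, where $n(g,t)$ counts these indices. The independence of $n(g,t) \bmod 2$ on the product representation then follows automatically, since $\rho_g$ depends only on $g$. The main obstacle is the base-case identification $\pi_s = j_s$, which rests on the trajectory-level computation $st = g_{\{s, j_s(t)\}}$; everything else is bookkeeping via the homomorphism.
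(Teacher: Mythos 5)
Your proof is correct, and it reaches the crucial base case $\rho_s(e_t)=e_{j_s(t)}$ (for $t\neq s$) by a genuinely different route than the paper. The paper argues with facets: $\rho_s$ permutes the $n$ facets incident to the vertex $1$, sends $F_s^+$ to $F_s^-$, and sends $F_t^+$ to $F_{j_s(t)}^+$ because the edge labels spanning a facet at the vertex $1$ match those spanning it at the vertex $s$ only after applying $j_s$; restricting to barycenters gives the formula. You stay entirely in the $1$-skeleton: you identify the image of the edge $\{1,t\}$ as $\{s,st\}$ and then locate $st$ in cube coordinates via the trajectory relation $t\,s\,j_s(t)\,j_{j_s(t)}(s)=1$, which exhibits $st$ as the common neighbor of $s$ and $j_s(t)$ other than $1$, i.e.\ as $g_{\{s,j_s(t)\}}$; subtracting vertex vectors then yields $\rho_s(e_t)=e_{j_s(t)}$. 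Your version is more elementary (only vertices, edges, and the defining relations of $W(\Gamma)$ are used, plus the observation that two cube vertices at distance $2$ have exactly two common neighbors, and the easy check that $j_s(t)\neq s$), while the paper's facet argument is shorter but relies on more of the face structure of the cube. The iteration step is essentially identical in the two proofs. One detail worth noting in your favor: your sign count correctly includes the flip contributed by the first factor (the case $t=s_1$); the printed index set $[1,k-1]$ in the paper's definition of $n(g,t)$ omits this term (it should start at $0$, with the empty composition of the $j$'s read as the identity), and the paper's own computation of $\rho_{s_1}(e_t)$ tacitly uses the same convention you do.
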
 

\begin{proof}{}
First we prove that for any $s\in S$, we have 
\[\rho_s(e_t)=\left\{\begin{array}{ll}
-e_t & \mbox{if $t=s$}\\
e_{j_s(t)} & \mbox{if $t\neq s$.}\end{array}\right.\]
For each $t\in S$, let $F_t^{+}$ (respectively $F_t^-$) denote the
facet (codimension-$1$ face) of the cube $[-1,1]^S$ with barycenter
$e_t$ (resp., $-e_t$).  The map $\rho_s$ must take the $n$ facets
incident to the vertex $1$ bijectively to the $n$ facets incident to the
vertex $s$, permuting those facets incident to {\em both} $1$ and $s$.
The facets incident to $1$ are precisely the ``positive''
ones $\{F_t^+\;|\; t\in S\}$, and the facets incident to $s$ are
$\{F_t^+\;|\; t\neq s\}\cup\{F_s^-\}$.  So $\rho_s$ must map $F_s^+$
to $F_s^-$, and hence for $t=s$ we have $\rho_s(e_t)=-e_t=-e_{j_s(t)}$.  

Now suppose $t\neq s$, then the facet $F_t^+$ is the unique facet
containing the $(n-1)$ edges incident to the vertex $1$ that are labeled by
the set $\{u\;|\; u\neq t\}$.  Equivalently, $F_t^+$ is the unique facet 
containing the $(n-1)$ edges incident to the vertex $s$ that are labeled by
the set $\{j_s(u)\;|\; u\neq t\}$ (see Figure~\ref{fig:facet}).  
\begin{figure}[ht]
\begin{center}
\psfrag{s}{$s$}
\psfrag{ss}{$\mathbf s$}
\psfrag{1}{$\mathbf 1$}
\psfrag{t}{$t$}
\psfrag{u}{$u$}
\psfrag{u'}{$u'$}
\psfrag{j_s(t)}{$j_s(t)$}
\psfrag{j_s(u)}{$j_s(u)$}
\psfrag{j_s(u')}{$j_s(u')$}
\psfrag{F}{$\mathbf F_t^+$}
\includegraphics[scale = .6]{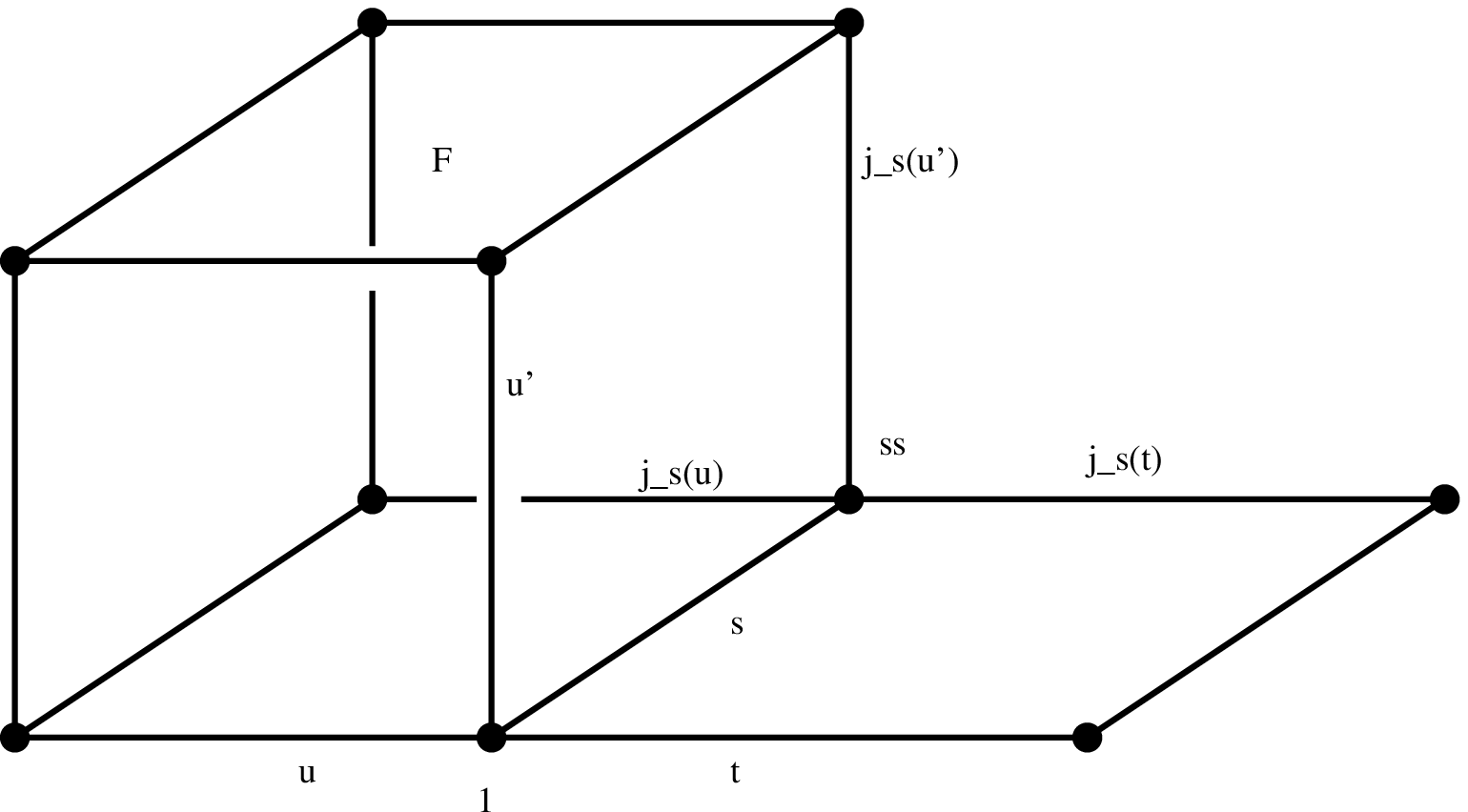}
\caption{\label{fig:facet}}
\end{center}
\end{figure}
Since
$\rho_s$ restricts to the left action of $s$ on the Cayley graph and
this action preserves edge labels, it follows that $\rho_s(F_s^+)$ is
the unique facet spanned by the edges incident to the vertex $s$ that
are labeled by the set $\{u\;|\; u\neq t\}$.  This is the same
as the facet spanned by the edges incident to the vertex $1$ that
are labeled by the set $\{j_s(u)\;|\; u\neq t\}$, or equivalently, the
set $\{u\;|\; u\neq j_s(t)\}$.  But this is precisely the facet
$F_{j_s(t)}^+$.  Restricting $\rho_s$ to the barycenters of these
facets, we then have $\rho_s(e_t)=e_{j_s(t)}$.

Finally, suppose $g=s_k\cdots s_2 s_1$.  Then 
\begin{align*}
\rho_g(e_t) & = \rho_{s_k}\cdots\rho_{s_2}\rho_{s_1}(e_t)\\
            & = \rho_{s_k}\cdots\rho_{s_2}((-1)^{n(s_1,t)}e_{j_{s_1}(t)})\\
            & =\rho_{s_k}\cdots\rho_{s_3}((-1)^{n(s_2s_1,t)} e_{j_{s_2}j_{s_1}(t)})\\
& \hspace{.5in}\vdots \\
            & =(-1)^{n(s_k\cdots s_1,t)}e_{j_{s_k}\cdots j_{s_2}j_{s_1}(t)}\\
            & =(-1)^{n(g,t)}e_{j_g(t)}
\end{align*}
as desired. 
\end{proof}
   
It follows immediately from this formula for the geometric
representation $\rho$ that if $T\subseteq S$ is a $G$-invariant subset of
$S$, then $\bbR^T\subseteq \bbR^S$ is an invariant subspace for
$\rho$.  By Theorem~\ref{thm:2-orbits}, $S$ always admits a proper
nontrivial $G$-invariant subset, hence we have our second theorem from
the introduction.

\begin{corollary}
If $(G,S)$ is a cube group, then the geometric representation $\rho$ 
is reducible. 
\end{corollary}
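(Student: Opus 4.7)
My plan is to exhibit a proper, nontrivial $\rho$-invariant subspace of $\bbR^S$, which will directly establish reducibility. As in the introduction, I read the statement as implicitly assuming rank $\geq 2$, since the reducibility question is vacuous when $|S|=1$; beyond that, everything needed has already been assembled in the earlier sections.

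The first step is to produce a suitable $G$-invariant subset of $S$. Because $\Cay(G,S)$ is an $n$-cube, we have $|G|=2^n$, so $G$ is a $2$-group. The permutation action of $G$ on $S$ from Proposition~\ref{prop:permrep} satisfies $s\cdot s=j_s(s)=s$ for every $s\in S$, so each generator in the generating set $S$ fixes some element of $X=S$. Applying Theorem~\ref{thm:2-orbits} with $p=2$ yields at least two $G$-orbits in $S$, and I would let $T$ be any single orbit, so that $\emptyset\neq T\subsetneq S$ and $T$ is $G$-invariant.

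The second step is to use the explicit formula $\rho_g(e_t)=(-1)^{n(g,t)}e_{j_g(t)}$ from the preceding proposition. For any $t\in T$ and any $g\in G$, $G$-invariance of $T$ forces $j_g(t)\in T$, so $\rho_g(e_t)\in\bbR^T$. Thus $\bbR^T\subseteq\bbR^S$ is a proper, nontrivial, $\rho$-invariant subspace, and $\rho$ is reducible.

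There is no genuine obstacle at this stage: all of the real work has been localized in Theorem~\ref{thm:2-orbits} (the Sylow-style counting argument producing a second orbit) and in the preceding proposition (the explicit action of $\rho_g$ on the standard basis). Given those, the corollary is essentially a one-line assembly — pull back a $G$-orbit through the coordinate identification and observe that the formula for $\rho_g$ respects it.
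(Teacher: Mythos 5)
Your proof is correct and follows exactly the paper's argument: apply Theorem~\ref{thm:2-orbits} (with $p=2$, using $j_s(s)=s$) to obtain a proper nontrivial $G$-invariant subset $T\subseteq S$, and then use the formula $\rho_g(e_t)=(-1)^{n(g,t)}e_{j_g(t)}$ to see that $\bbR^T$ is a proper nontrivial invariant subspace. Your remark about implicitly assuming rank $\geq 2$ is also consistent with the statement of Theorem~\ref{Thm:reducible} in the introduction.
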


\end{document}